%
%
\documentclass{llncs}
\usepackage{amsfonts}
\usepackage[utf8x]{inputenc}
\usepackage{amssymb,amsmath}

\usepackage{color}
\definecolor{keywordcolor}{rgb}{0.7, 0.1, 0.1}   
\definecolor{commentcolor}{rgb}{0.4, 0.4, 0.4}   
\definecolor{symbolcolor}{rgb}{0.0, 0.1, 0.6}    
\definecolor{sortcolor}{rgb}{0.1, 0.5, 0.1}      

\usepackage{listings}

\lstset{language=lean}
\lstMakeShortInline"

\newcommand{\N}{\mathbb{N}}
\newcommand{\Z}{\mathbb{Z}}

\begin{document}

\title{A Lean formalization of Matiyasevi\v{c}'s Theorem}
\titlerunning{Matiyasevi\v{c}'s Theorem}  
%
\author{Mario Carneiro}
\authorrunning{Mario Carneiro} 
%
%
\institute{Carnegie Mellon University, Pittsburgh PA 15289, USA,\\
\email{mcarneir@andrew.cmu.edu}}

\maketitle              

\begin{abstract}
In this paper, we present a formalization of Matiyasevi\v{c}'s theorem, which states that the power function is Diophantine, forming the last and hardest piece of the MRDP theorem of the unsolvability of Hilbert's 10th problem. The formalization is performed within the Lean theorem prover, and necessitated the development of a small number theory library, including in particular the solution to Pell's equation and properties of the Pell $x,y$ sequences.
\keywords{Diophantine equations, Pell's equation, MRDP theorem, Matiyasevi\v{c} theorem, Lean, Computability}
\end{abstract}
\section{Introduction}
In 1900, David Hilbert presented a list of 23 unsolved problems to the International Congress of Mathematicians \cite{hilbert}. Of these, the tenth was the following: To find an algorithm to determine whether a given polynomial Diophantine equation with integer coefficients has an integer solution. This problem remained unsolved for 70 years before Matiyasevi\v{c} proved there is no such algorithm.

A Diophantine equation is an equation such as
$$x+xy+3z^2-1=0\qquad x,y,z\in\Z,$$
in which we seek to find the \emph{integer} solutions to a polynomial with integer coefficients. Solutions to Diophantine equations can encode a wide variety of arithmetic operations, and even some very advanced number theory: For example, for fixed $n>2$, if the Diophantine equation $$x^n+y^n-z^n=0$$ had a solution with $xyz\ne 0$, then Fermat's last theorem would be false. So a positive answer to Hilbert's problem would have been very big news indeed.

Define a Diophantine set as a subset $S\subseteq \N^k$ such that there exists an $m\in\N$ and an integer polynomial $p\in\Z[x_1,\dots,x_k,y_1,\dots,y_m]$ such that
$$(x_1,\dots,x_k)\in S\iff \exists y_1,\dots,y_m\in\N,\ p(x_1,\dots,x_k,y_1,\dots,y_m)=0.$$
Here the $x_i$ are viewed as ``parameters'' of the Diophantine equation, and the $y_i$ are the unknowns, or ``dummy variables''. Hilbert's problem asks if we can determine if a given Diophantine set is nonempty from this description of it.

The Matiyasevi\v{c}--Robinson--Davis--Putnam (MRDP) theorem asserts that a set $S$ is Diophantine if and only if it is recursively enumerable, that is, the elements of $S$ can be enumerated (with repetition allowed) as the range of a partial computable function. Since the problem of determining if a partial computable function is nonempty is equivalent to the halting problem, which is undecidable, this shows that there is no algorithm for Hilbert's tenth problem.

Even before Matiyasevi\v{c}'s contribution, a great deal of progress had been made by Davis, Putnam, and Robinson. It suffices to consider only natural number variables in the definition of a Diophantine set because of Lagrange's four-square theorem: For fixed $n$,
$$x^2+y^2+z^2+w^2=n\qquad x,y,z,w\in\Z$$ 
has a solution if and only if $n\in\N$, so any natural number quantifier may be replaced with four integer quantifiers. (Conversely, $n-m=x$ has a solution with $n,m\in\N$ iff $x\in\Z$.)

A Diophantine relation is a relation that is Diophantine as a subset of $\N^k$, and a function $f:\N^k\to\N$ is Diophantine if its graph $\{(\bar x,y)\mid y=f(\bar x)\}\subseteq\N^{k+1}$ is Diophantine. The simple arithmetic functions and relations $+,-,\times,<,\le,=$ are easily shown to be Diophantine, and conjunctions and disjunctions of Diophantine relations are Diophantine because of the equivalences:
$$p(\bar x)=0\wedge q(\bar x)=0\iff p(\bar x)^2+q(\bar x)^2=0$$
$$p(\bar x)=0\lor q(\bar x)=0\iff p(\bar x)q(\bar x)=0$$
Thus $\ne$ is also Diophantine since $x\ne y\leftrightarrow x<y\lor x>y$. (The complement of a Diophantine set is not necessarily Diophantine.) The notable omission from this list is the power function $x^y$, which is the key to showing that one can encode sequences of variable length as numbers. Given this, it would be possible to encode the solutions to an unbounded search problem, such as a halting computation, as the solution to a Diophantine equation.

This is the critical piece of the puzzle that Matiyasevi\v{c} solved, by showing that the power function $x^y$ is Diophantine.
\begin{theorem}[Matiyasevi\v{c}, 1970]
\label{matiya}
The function $f(x,y)=x^y$ is Diophantine. That is, there is a $k\in\N$ and a $k+3$-ary integer polynomial $p(x,y,w,\bar z)$ such that
$$x^y=w\iff \exists\bar z\in\N,\ p(x,y,w,\bar z)=0.$$
\end{theorem}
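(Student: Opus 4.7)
The plan is to follow the classical Matiyasevic approach via solutions to Pell's equation. Fix $a\geq 2$ and consider $X^2 - (a^2-1)Y^2 = 1$, whose nonnegative solutions form sequences $(x_n(a), y_n(a))$ with $x_0=1$, $y_0=0$, $x_1=a$, $y_1=1$, and common linear recurrence $u_{n+1} = 2a\,u_n - u_{n-1}$. The first task is to develop the algebraic theory of these sequences up to the congruence
$$x_n(a) - (a-b)\,y_n(a) \;\equiv\; b^n \pmod{2ab - b^2 - 1},$$
which is what ultimately encodes the power function. Combined with a growth bound that lets one choose $a$ large enough (polynomially in $b$ and $n$) so that $2ab-b^2-1 > 2b^n$, this reduces Theorem~\ref{matiya} to showing that the ternary relation $v = y_n(a)$ is itself Diophantine, together with standard arithmetic closure facts.

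The central obstacle is therefore the \emph{Matiyasevic lemma}: the Diophantineness of $v = y_n(a)$. My approach is the one originally due to Matiyasevic and cleaned up in Davis's exposition: introduce an auxiliary parameter $A$ with its own Pell solutions $(X, Y)$ and pin $n$ down modulo carefully chosen quantities using three families of identities — the linear congruence $y_m(a) \equiv m \pmod{a-1}$, the divisibility pattern $y_n(a) \mid y_m(a) \iff n \mid m$ together with its strengthening $y_n(a)^2 \mid y_m(a) \iff n\,y_n(a) \mid m$, and the perturbation congruence $y_m(A) \equiv y_m(a) \pmod{A-a}$. A conjunction of roughly a dozen Diophantine conditions (each of bounded arithmetic complexity) then forces $Y = y_n(A)$ and $v = y_n(a)$, with both directions of the equivalence provable.

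The main formalization difficulty is proving and managing this web of identities inside a theorem prover. Each congruence is by induction on the Pell recurrence, but they are tightly interdependent, and many require matching inequalities (monotonicity $y_n(a) < y_{n+1}(a)$, lower bound $y_n(a) \geq n$, upper bound $y_n(a) \leq (2a)^n$, and comparisons between $x_n$ and $y_n$). My plan is therefore to first build a self-contained library of Pell-sequence lemmas — recurrence, a closed form in the ring $\Z[\sqrt{a^2-1}]$ via conjugate pairs, monotonicity, divisibility, periodicity modulo an integer — and only after this foundation is solid, to assemble the long Matiyasevic conjunction and verify the two directions of the equivalence. Once $v = y_n(a)$ is known to be Diophantine, deriving Theorem~\ref{matiya} from the congruence above is a matter of invoking closure of Diophantine relations under $+$, $\times$, $\wedge$, $\vee$, and bounded existential quantification, all of which must also be formalized but are relatively routine.
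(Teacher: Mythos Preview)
Your proposal is correct and follows essentially the same approach as the paper: both trace Davis's exposition, building the ring $\Z[\sqrt{a^2-1}]$, developing the Pell-sequence library (recurrence, monotonicity, divisibility, the congruences $y_m\equiv m\pmod{a-1}$, $y_n^2\mid y_m\iff ny_n\mid m$, and the base-change congruence), and then assembling the long conjunction that pins down the Pell index Diophantinely---your ``Matiyasevic lemma'' is precisely the paper's Theorem~\ref{pell_dioph}, with your auxiliary parameter $A$ playing the role of the paper's $b$. The only cosmetic difference is that the paper characterizes the pair $(x_k,y_k)$ jointly rather than $y_k$ alone, and your final closure list should say \emph{existential} quantification (not merely bounded), but neither affects the substance.
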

In this paper, we will present a formalized%
\footnote{The latest version of the proof is available as part of the Lean \texttt{mathlib} mathematics library, at:\\ \url{https://github.com/leanprover/mathlib/blob/master/number\_theory/pell.lean}.}
proof of theorem \ref{matiya}, in the Lean theorem prover. The proof naturally separates into two parts:
\begin{itemize}
\item The number theory: Developing the theory of the Pell equation, which features prominently in the proof.
\item The theory of Diophantine sets and functions, composition of Diophantine functions, and the proof that all the relevant functions in the proof are Diophantine.
\end{itemize}

The proof is based on a well-written exposition by Davis \cite{davis}. Section \ref{numbers} will discuss the number theory part of the proof, and section \ref{dioph} will discuss the engineering challenges that arise in proving that certain sets are Diophantine.
\section{The number theory}
\label{numbers}

It was already recognized before Matiyasevi\v{c} that the following ``Julia Robinson hypothesis'' would suffice to prove that the exponential function is Diophantine:
\begin{quote}
\emph{There exists a Diophantine set $D\subseteq\N^2$ such that:
\begin{itemize}
\item $(x,y)\in D\to y \le x^x$
\item For every $n$, there exists $(x,y)\in D$ such that $y > x^n$.
\end{itemize}}
\end{quote}
Thus it is really exponential \emph{growth rate} that is important, because the exponential function itself can be recognized as solving a system of congruences.

In Matiyasevi\v{c}'s original proof, the Fibonacci sequence was used for this purpose. In Davis's proof (which we follow), the Pell $x_n,y_n$ sequences are used instead, but the ideas are similar.

Fix an integer $d>0$ which is not a square, and consider the equation
$$x^2-dy^2=1.$$
This is obviously a Diophantine equation, and what makes it suitable for this application is that it has infinitely many solutions but they are spread out exponentially. In fact, there exists a ``fundamental solution'' $x_1,y_1$ such that every solution is given by
$$x_n+y_n\sqrt d=(x_1+y_1\sqrt d)^n$$
for some $n\in\N$. We will be interested in the special case $d=a^2-1$ for some $a>1$, in which case we can use the explicit fundamental solution $x_1=a$, $y_1=1$.

In order to formalize this kind of definition in Lean, we constructed the ring $\Z[\sqrt d]$ consisting of elements of the form $a+b\sqrt d$, where $a,b\in\Z$.

\subsection{The ring $\Z[\sqrt d]$}

\begin{lstlisting}
  structure zsqrtd (d : ℕ) := (re : ℤ) (im : ℤ)
  prefix `ℤ$√$`:100 := zsqrtd
\end{lstlisting}

This defines $\Z[\sqrt d]\simeq\Z\times\Z$ with the projection functions called "re" and "im", by analogy to the real and imaginary part functions of $\Z[i]$, although of course here both parts are real since $d$ is nonnegative (and sets up the notation "ℤ√d" for "zsqrtd d").

On this definition we can straightforwardly define addition and multiplication by their actions on the ``real'' and ``imaginary'' parts:

\begin{lstlisting}
  def add : ℤ√d → ℤ√d → ℤ√d
  | ⟨x, y⟩ ⟨x', y'⟩ := ⟨x + x', y + y'⟩

  def mul : ℤ√d → ℤ√d → ℤ√d
  | ⟨x, y⟩ ⟨x', y'⟩ := ⟨x * x' + d * y * y', x * y' + y * x'⟩
\end{lstlisting}

In order to recover the conventional ordering inherited from $\mathbb{R}$, we proceed in stages:

\begin{lstlisting}
  def sq_le (a c b d : ℕ) : Prop := c * a * a ≤ d * b * b

  def nonnegg (c d : ℕ) : ℤ → ℤ → Prop
  | (a : ℕ) (b : ℕ) := true
  | (a : ℕ) -[1+ b] := sq_le (b+1) c a d
  | -[1+ a] (b : ℕ) := sq_le (a+1) d b c
  | -[1+ a] -[1+ b] := false

  def nonneg : ℤ√d → Prop
  | ⟨a, b⟩ := nonnegg d 1 a b
\end{lstlisting}

The four-part relation "sq_le$\;a\;c\;b\;d$" expresses that $a\sqrt c\le b\sqrt d$ when $a,b,c,d$ are all nonnegative integers. In our application, either $c$ or $d$ will always be $1$, but it is convenient for the proof to have a ``symmetrized'' predicate for this to decrease the number of cases.

The generalized nonnegativity predicate "nonnegg $c$ $d$ $a$ $b$" asserts that $a\sqrt c+b\sqrt d\ge 0$, when $c,d\in\N$ and $a,b\in\Z$, by cases on whether $a,b$ are nonnegative or negative. This can then be used to construct the nonnegativity predicate "nonneg" on "ℤ√d", and then we may define $z\le w\iff \mathtt{nonneg}(w-z)$ when $z,w\in{}$"ℤ√d". 

From these definitions, it can be shown that "ℤ√d" is an archimedean linearly ordered commutative ring, although we must add the assumption that $d$ is a non-square to prove antisymmetry of $\le$, which in this context is equivalent to the assertion that $\sqrt d$ is irrational.

\begin{remark}
It may reasonably be asked why we did not define "ℤ√d" as a certain subset of $\mathbb{R}$ with the induced operations. The advantage of this approach is that all the operations here are computable, and the existence of an $a+b\sqrt d$ normal form for every element is built in to the definition. (Also, a definition of $\mathbb{R}$ with a square root function did not exist at the time of writing of the formalization.)
\end{remark}

\subsection{Pell's equation}
Most of the concepts surrounding Pell's equation translate readily to statements about elements of $\Z[\sqrt d]$. There is a ``conjugation'' operation $(a+b\sqrt d)^*=a-b\sqrt d$ on $\Z[\sqrt d]$, in terms of which $(x,y)$ is a solution to Pell's equation iff $N(x+y\sqrt d)=1$, where $N(z)=zz^*$ is the norm on $\Z[\sqrt d]$. Let $z_n=x_n+y_n\sqrt d$; then $z_n=z_1^n$ where $z_1=a+\sqrt d$, and the conclusion of Pell's theorem states that a solution to Pell's equation is equal to $z_n$ for some $n$.

\begin{remark}
In fact, we don't use the "ℤ√d" definition as the definition for the Pell sequences, because then we would lose the information that they are nonnegative integers. Instead, we use a mutual recursive definition and show thereafter that $z_n=x_n+y_n\sqrt d$.
\begin{lstlisting}
  mutual def xn, yn
  with xn : ℕ → ℕ
  | 0     := 1
  | (n+1) := xn n * a + d * yn n
  with yn : ℕ → ℕ
  | 0     := 0
  | (n+1) := xn n + yn n * a
\end{lstlisting}
\end{remark}

\begin{theorem}
If $N(z)=1$ and $z\ge 1$, then there exists $n\ge 0$ such that $z=z_n$.
\end{theorem}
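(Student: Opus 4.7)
The plan is a classical descent argument in two stages. \textbf{Stage 1: minimality of $z_1$.} I first prove that $z_1 = a + \sqrt{d}$ is the smallest Pell solution strictly above $1$: if $N(z)=1$ and $1 \le z < z_1$, then $z=1$. Writing $z = x + y\sqrt{d}$ with $x,y \in \Z$, the identity $zz^* = N(z) = 1$ gives $z^{-1} = z^* = x - y\sqrt{d}$, and $z \ge 1$ forces $0 < z^{-1} \le 1$. Adding and subtracting yields $2x = z + z^{-1} > 0$ and $2y\sqrt{d} = z - z^{-1} \ge 0$, so $x \ge 1$ and $y \ge 0$. If $y \ge 1$, then $x^2 = 1 + dy^2 \ge 1 + (a^2-1) = a^2$ forces $x \ge a$, whence $z \ge a + \sqrt{d} = z_1$, contradicting $z < z_1$. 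So $y=0$ and $x=1$, i.e., $z=1$.

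\textbf{Stage 2: descent.} For a general $z$ with $N(z)=1$ and $z \ge 1$, use the archimedean property of $\Z[\sqrt{d}]$ established earlier, together with $z_1 > 1$, to choose the largest $n \ge 0$ with $z_1^n \le z$. Since $N(z_1)=1$, the inverse $z_1^{-1} = z_1^* = a - \sqrt{d}$ lies in $\Z[\sqrt{d}]$, so $w := z \cdot (z_1^*)^n$ is again in $\Z[\sqrt{d}]$, has norm $N(z)\,N(z_1)^{-n} = 1$, and satisfies $1 \le w < z_1$ by maximality of $n$. Stage~1 then gives $w = 1$, hence $z = z_1^n = z_n$, the last equality by the mutual recursion defining $x_n, y_n$.

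\textbf{Main obstacle.} The algebra is elementary; the real work for formalization lies in bridging the abstract order on $\Z[\sqrt{d}]$ (given by the case-split \texttt{nonneg}/\texttt{nonnegg}/\texttt{sq\_le} predicates) with the coefficient-level manipulations used above. Extracting ``$x \ge 1$ and $y \ge 0$'' from ``$1 \le z$ and $z^* \le 1$'', and deducing $z \ge z_1$ from $x \ge a$ and $y \ge 1$, each split into several sub-cases on the signs of the coefficients; setting up the corresponding library lemmas is where most of the effort goes. With those in hand, the descent itself is only a few lines, and the archimedean step reduces to the easy observation that $z_1 \ge a + 1 > 1$ so that $z_1^n$ is unbounded.
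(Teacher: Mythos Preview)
Your argument is correct and follows the same overall descent strategy as the paper: reduce to a unit-norm element $w$ with $1\le w<z_1$ by dividing through by the appropriate power of $z_1$, then show no such $w$ other than $1$ exists. The only noticeable difference is in the minimality step. The paper argues that $1<w<z_1$ gives (via conjugates/inverses) $0<w-w^*<z_1-z_1^*=2\sqrt d$, hence the imaginary part $b$ of $w$ satisfies $0<b<1$, an impossibility for an integer. You instead use the Pell relation $x^2=1+dy^2$ to show that $y\ge 1$ forces $x\ge a$ and hence $z\ge z_1$, ruling out $y\ge 1$ by the upper bound rather than the lower one. Both routes are equally elementary; yours is perhaps a touch more direct for formalization since it never needs the strict inequality $w>1$ and works cleanly with the weak bound $1\le w<z_1$ coming from the ``largest $n$'' choice, whereas the paper's phrasing via contradiction and strict inequalities would need a small extra case split to handle $w=1$.
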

\begin{proof}
Note that $z_n^*$ is the inverse of $z_n$ because $N(z_n)=1$. Also, $z_1>1$, so the sequence $z_n=z_1^n$ is monotonically increasing and unbounded. So if the theorem is false, then there exists $n$ such that $z_n<z<z_{n+1}$, so, multiplying by $z_n^*$, $1<zz_n^*<z_1$, so $w=zz_n^*$ also has unit norm and lies between $1$ and $z_1$.

Since inverses are conjugates, $1<w<z_1$ implies $1>w^*>z_1^*$ so $$0<w-w^*<z_1-z_1^*=2\sqrt d,$$
but if $w=a+b\sqrt d$ then $(a+b\sqrt d)-(a+b\sqrt d)^*=2b\sqrt d$, so $0<b<1$, which is impossible.
\end{proof}

But the key theorem here, Matiyasevi\v{c}'s contribution, is the following theorem:

\begin{theorem}\label{pell_dioph}
Let $a,k,x,y\in \N$ with $a>1$. Then 
$$x=x_k(a)\wedge y=y_k(a)\iff k\le y\land ((x=1\land y=0)\lor\varphi),$$
where
\begin{align*}
\varphi:=\exists u,v,s,t,b\in\N\quad
[&x^2-(a^2-1)y^2=1\ \wedge\\
&u^2-(a^2-1)v^2=1\ \wedge\\
&s^2-(b^2-1)t^2=1\ \wedge\\
&b>1\ \wedge\ b\equiv 1\pmod{4y}\ \wedge\ b\equiv a\pmod u\ \wedge\\
&v>0\ \wedge\ y^2\mid v\ \wedge\\
&s\equiv x\pmod u\ \wedge\ t\equiv k\pmod {4y}].
\end{align*}
\end{theorem}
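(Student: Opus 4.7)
The plan is to prove the two directions separately, assembling a toolkit of Pell sequence identities. Beyond the theorem already proved (that every unit of norm $1$ in $\Z[\sqrt{a^2-1}]$ equals some $z_n(a)$), I anticipate needing: (i) the addition formulas $x_{m+n}=x_mx_n+(a^2-1)y_my_n$ and $y_{m+n}=x_my_n+y_mx_n$ from $z_{m+n}=z_mz_n$; (ii) base-change $b\equiv a\pmod{c}\Rightarrow x_n(b)\equiv x_n(a),\ y_n(b)\equiv y_n(a)\pmod{c}$, which is a polynomial identity; (iii) linearization $y_n(a)\equiv n\pmod{a-1}$, easily inductive; (iv) the size bound $y_n(a)\ge n$; and most critically, the periodicity $x_m(a)\equiv \pm x_i(a)\pmod{x_j(a)}\iff m\equiv \pm i\pmod{2j}$ (which follows from $x_{2j}\equiv -1$ and $y_{2j}\equiv 0\pmod{x_j}$), together with the divisibility $y_i(a)^2\mid y_j(a)\iff i\cdot y_i(a)\mid j$.

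For the forward direction, the case $k=0$ is immediate via the first disjunct. For $k\ge 1$ I would pick $j$ to be a sufficiently large multiple of $k\cdot y_k(a)$ and set $u=x_j(a)$, $v=y_j(a)$, obtaining $y^2\mid v$ from the divisibility lemma. CRT then yields $b>1$ with $b\equiv 1\pmod{4y}$ and $b\equiv a\pmod{u}$: the consistency condition reduces to $\gcd(4,u)\mid a-1$, because $\gcd(y,u)=1$ (from $k\mid j$ and $\gcd(x_j,y_j)=1$), and this is arranged either by choosing $j$ large enough to force $u$ odd or by a short case split on the parity of $a$. Putting $s=x_k(b)$, $t=y_k(b)$ makes the third Pell equation automatic; then $s\equiv x\pmod{u}$ follows from base-change, and $t\equiv k\pmod{4y}$ from linearization applied to base $b$, since $4y\mid b-1$.

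For the backward direction, if the first disjunct holds then $k\le y=0$ forces $k=0$ and we are done. Otherwise $\varphi$ holds: the three Pell equations yield indices with $(x,y)=(x_i(a),y_i(a))$, $(u,v)=(x_j(a),y_j(a))$ where $j\ge 1$ from $v>0$, and $(s,t)=(x_m(b),y_m(b))$. Base-change and $s\equiv x\pmod{u}$ give $x_m(a)\equiv x_i(a)\pmod{x_j(a)}$, so the periodicity lemma yields $m\equiv \pm i\pmod{2j}$; the divisibility lemma applied to $y^2\mid v$ gives $iy\mid j$, hence $y\mid j$ and $m\equiv \pm i\pmod{2y}$; linearization in base $b$ with $4y\mid b-1$ gives $y_m(b)\equiv m\pmod{4y}$, so $m\equiv k\pmod{2y}$. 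Combining, $k\equiv \pm i\pmod{2y}$, and because $k\le y$ (hypothesis) and $i\le y_i(a)=y$ (size bound), the only possibilities are $k=i$ or the boundary case $k+i=2y$ which forces $k=i=y$; either way $k=i$.

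I expect the main obstacle to be formalizing the two critical lemmas — the periodicity of $x_m\pmod{x_j}$ and the sharp divisibility $y_i^2\mid y_j\iff iy_i\mid j$. The first needs $x_i$ to be uniquely determined by $\pm x_i\pmod{x_j}$ on a fundamental domain of indices, which rests on the size estimate $x_i<x_j$ for $i<j$; the second requires carefully tracking two factors of $y_i$ in an expansion of $y_{ij}$ via the addition formula. Once these are in place, the rest reduces to inductive arithmetic on the mutually recursive definitions of $x_n,y_n$ and routine CRT, which Lean can handle cleanly once the basic congruence infrastructure on $\N$ is available.
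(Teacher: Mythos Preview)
Your proposal is correct and faithfully reproduces Davis's argument, which the paper explicitly adopts as the basis for its formalization (the paper itself omits the details of this proof and refers the reader to \cite{davis} and the Lean source). One minor imprecision: the oddness of $u=x_j(a)$ needed for the CRT step is governed by the parity of $j$ when $a$ is even (and is automatic when $a$ is odd), not by $j$ being ``large enough''; taking $j$ to be an even multiple of $ky$ handles this, so the argument goes through unchanged.
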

The relevance of the complicated-looking right hand side is that all the relations there are easily seen to be Diophantine. (For example, $x\mid y$ iff $\exists k\,xk-y=0$ demonstrates that divisibility is Diophantine.) Thus the property of being the pair $x_n(a),y_n(a)$ is Diophantine in $x,y,n,a$. (For fixed $a,n$, this is trivial, and if we quantify over $n$ this is just the Pell equation, but the major advance is that now $n$ is a parameter.)

As the informal proof of this is laid out in \cite{davis} and the formal proof is online, we will omit the details here and refer interested readers to the formalization.
\section{Diophantine sets}
\label{dioph}
The informal proof contains many theorems in the style of Theorem \ref{pell_dioph}, where we have an equivalence between a target relation or function on one side, and a conjunction of disjunctions of quantified Diophantine predicates of Diophantine functions on the other, and will immediately conclude from such a theorem that the target relation is Diophantine. But anyone who has spent a significant time with formalization will recognize that these kinds of ``proof by inspection'' can be some of the worst offenders for disparity between the lengths of formal and informal proof. So as part of this proof, we were forced to develop a framework for dealing with such assertions.
\begin{lstlisting}
  def dioph {α : Type u} (S : set (α → ℕ)) : Prop :=
  ∃ {β : Type u} (p : poly (α ⊕ β)), ∀ (v : α → ℕ),
    v ∈ S ↔ ∃ t, p (v ⊗ t) = 0
\end{lstlisting}

We define a Diophantine set as a subset of $\N^\alpha$ (for some type $\alpha$) such that there exists a set of dummy variables $\beta$ and an integer polynomial $p$ with variables in $\alpha+\beta$ such that $v\in S$ iff there exists a $t\in\N^\beta$ such that $p(v,t)=0$.

The generality of types allows us to choose $\alpha$ and $\beta$ to be whatever type is convenient for specifying the problem, but for a systematic approach to translating formulas into proofs by composition we have to focus on the type "fin n" that contains $n$ elements, and "vector α n", the type of functions from "fin n" to $\alpha$. We generally want to prove a theorem of the form "dioph {v : α → ℕ | $\varphi$}" by a proof mirroring the structure of $\varphi$. For example, we can implement the divides relation and an existential quantifier like so:

\begin{lstlisting}
  theorem dvd_dioph {α : Type} {f g : (α → ℕ) → ℕ} :
    dioph_fn f → dioph_fn g → dioph (λ (v : α → ℕ), f v ∣ g v)

  theorem vec_ex1_dioph (n) {S : set (vector ℕ (succ n))}
    (d : dioph S) : dioph (λv : vector ℕ n, ∃x, S (x :: v))
\end{lstlisting}

Since the projections have to be referred by number, after a system of abbreviations, the result looks like the original formula written with de Bruijn indices (like the below for Theorem \ref{pell_dioph}). We hope to automate these proofs in the future.

\begin{lstlisting}
  D.1 D< D&0 D∧ D&1 D≤ D&3 D∧ ((D&2 D= D.1 D∧ D&3 D= D.0) D∨
   (D∃4 $\dol$ D∃5 $\dol$ D∃6 $\dol$ D∃7 $\dol$ D∃8 $\dol$
    D&7 D* D&7 D- (D&5 D* D&5 D- D.1) D* D&8 D* D&8 D= D.1 D∧ $\dots$
\end{lstlisting}
%
%


\begin{thebibliography}{5}
%
\bibitem {hilbert}
Hilbert, D., Mathematichse Probleme, Vortrag, gehalten auf dem internationalen Mathematiker-Kongress zu Paris 1900. Nachrichten Akad. Wiss. Gottingen, Math. -Phys. Kl. (1900) 253--297. English translation: Bull. Amer. Math. Soc. 8, 437--47 (1901-1902)

\bibitem {davis}
Davis, M. Hilbert's Tenth Problem is Unsolvable. Amer. Math. Monthly 80, 233--269 (1973)
\end{thebibliography}
\end{document}